\newtheorem{lemme}{Lemma}
\newtheorem{theo}{Theorem}
\newtheorem*{theoreme}{Main Theorem}
\newtheorem{proposition}{Proposition}
\theoremstyle{definition}
\newtheorem*{rem}{Remark}
\newtheorem*{ex}{Example}
\title{Isometric and invertible composition operators on weighted Bergman spaces of Dirichlet series}
\date{}
\newcommand\dis{\displaystyle}
\newcommand{\biindice}[3]%
{%

\begin{array}[t]{c}
{\displaystyle #1}\\
{\scriptstyle #2}\\
{\scriptstyle #3}
\end{array}

}
\titleformat{\section}[block]
{\normalfont\large\bfseries\filcenter}
{\thesection}
{1em}
{}
\titleformat{\subsection}[block]
{\normalfont\bfseries\filcenter}
{\thesubsection}
{1em}
{}
\begin{document}
\begin{center}
{\Large \bfseries Isometric and invertible composition operators on weighted Bergman spaces of Dirichlet series}
\vspace{5pt}

{ \textit{ Maxime Bailleul }

 				   \par\vspace*{20pt}} 
\end{center}

\begin{abstract} We show that a composition operator on weighted Bergman spaces $\mathcal{A}_{\mu}^p$ is invertible if and only if it is Fredholm if and only if it is  an isometry. 
\end{abstract}

%
%

\section{Introduction}
In \cite{hedenmalm1995hilbert}, the authors defined the Hardy space $ \mathcal{H}^2$ of Dirichlet series with square-summable coefficients. Thanks to the Cauchy-Schwarz inequality, it is easy to see that $\mathcal{H}^2$ is a  space of analytic functions on $ \mathbb{C}_{ \frac{1}{2}}:= \lbrace s \in \mathbb{C}, \, \Re(s) > \frac{1}{2} \rbrace$. \linebreak F. Bayart introduced in \cite{bayart2002hardy} the more general class of Hardy spaces of Dirichlet series $ \mathcal{H}^p$ ($1 \leq p < + \infty$). In another direction, McCarthy defined in \cite{mccarthy2004hilbert} some weighted Bergman Hilbert spaces of Dirichlet series and these spaces have been generalized in \cite{bailleullefevre2013}.

In order to recall how these spaces are defined, we need to recall the principle of the Bohr's point of view: let ${n\geq 2}$ be an integer, it can be written (in a unique way) as a product of prime numbers $n=p_1^{ \alpha_1} \cdot \cdot \, \, p_k^{ \alpha_k}$ where $p_1=2
, \, p_2=3 $ \linebreak etc \dots For $s \in \mathbb{C}$, we consider $z=(z_1, \, z_2, \dots) = (p_1^{-s}, \, p_2^{-s}, \dots )$. Then, writing
\begin{equation} f(s) = \sum_{n=1}^{+ \infty} a_n n^{-s} \end{equation}
we get
\[ f(s) = \sum_{n=1}^{ + \infty } a_n (p_1^{-s})^{ \alpha_1} \cdot \cdot \, \, (p_k^{-s})^{ \alpha_k} = \sum_{n=1}^{ + \infty } a_n \, z_1^{ \alpha_1} \cdot \cdot \, \, z_k^{ \alpha_k}.  \]
So we can see a Dirichlet series as a Fourier series on the infinite-dimensional polytorus $ \mathbb{T}^{ \infty}= \lbrace (z_1, z_2, \cdots), \, \vert z_i \vert=1, \, \forall i \geq 1 \rbrace$. We shall denote this Fourier series $D(f)$. 

Let us fix now $p\ge1$. The space $H^p( \mathbb{T}^{ \infty})$ is the closure of the set of analytic polynomials with respect to the norm of $L^p( \mathbb{T}^{ \infty}, \, m)$ where $m$ is the normalized Lebesgue measure on $\mathbb{T}^{\infty}$. Let $f$ be a Dirichlet polynomial, $D(f)$ is then an analytic polynomial on $ \mathbb{T}^{ \infty}$ by the Bohr's point of view. By definition, $ \Vert f \Vert_{ \mathcal{H}^p}:= \Vert D(f) \Vert_{ H^p( \mathbb{T}^{ \infty})}$ and $ \mathcal{H}^p$ is the closure of the set of Dirichlet polynomials with respect to this norm. The spaces $ \mathcal{H}^p$ and $ H^p( \mathbb{T}^{ \infty})$ are then isometrically isomorphic. \\

 We recall now how we can define the weighted Bergman spaces of Dirichlet series. For $ \sigma>0$, $f_{ \sigma}$ will be the translate of $f$ by $\sigma$, \textit{i.e.} ${f_{ \sigma}(s):= f ( \sigma+s)}$. We shall denote by $\mathcal{P}$ the set of Dirichlet polynomials.

Let $p \geq 1$, $P \in \mathcal{P}$ and $ \mu$ be a probability measure on $(0, + \infty)$ such that $0 \in Supp( \mu)$. Then 
\[ \Vert P \Vert_{ \mathcal{A}_{ \mu}^p}:= \bigg{(} \int_{0}^{+ \infty} \Vert P_{ \sigma} \Vert_{ \mathcal{H}^p}^p \, d \mu( \sigma) \bigg{)}^{1/p} \cdot \]
$\mathcal{A}_{\mu}^p$ is the completion of $\mathcal{P}$ with respect to this norm. When $ d\mu( \sigma) = 2 e^{-2 \sigma} \, d \sigma$, these spaces are simply denoted by $ \mathcal{A}^p$. It is shown in \cite{bailleullefevre2013} that they are spaces of convergent Dirichlet series on $\mathbb{C}_{1/2}$. \\

In \cite{gordon1999composition}, the bounded composition operators on $ \mathcal{H}^2$, in other words the analytic functions $\Phi: \mathbb{C}_{ \frac{1}{2}} \rightarrow \mathbb{C}_{ \frac{1}{2}}$ such that for any $f \in \mathcal{H}^2$, $f \circ \Phi \in \mathcal{H}^2$, are characterized. In \cite{bayart2002hardy}, F. Bayart generalized this result to the space $\mathcal{H}^p$ when $p \geq 1$.

We denote by $\mathcal{D}$ the set of functions $f$ which admit a representation by a convergent Dirichlet series in some half-plane and for $\theta \in \mathbb{R}$, $\mathbb{C}_{\theta}$ will be the following half-plane $ \lbrace s \in \mathbb{C}, \, \Re(s) > \theta \rbrace$. We shall denote $\mathbb{C}_+$ instead of $\mathbb{C}_0$.

On the spaces $\mathcal{A}_{\mu}^p$, the following theorems have been proved in \cite{bailleulcompo}:
\begin{theo}[\cite{bailleulcompo},Th1] Let $ \Phi: \mathbb{C}_{ \frac{1}{2}} \rightarrow \mathbb{C}_{ \frac{1}{2}}$ be an analytic function of the form $ \Phi(s) =c_0s + \varphi(s)$ where $c_0 \geq 1$ and $ \varphi \in \mathcal{D}$. Then $C_{\Phi}$ is bounded on $\mathcal{A}_{\mu}^p$ if and only if $ \varphi$ converges uniformly in $ \mathbb{C}_{\varepsilon}$ for every $ \varepsilon>0$ and $ \varphi( \mathbb{C}_+) \subset \mathbb{C}_+$. Moreover in this case, $C_{\Phi}$ is a contraction.
\end{theo}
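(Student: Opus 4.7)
The argument splits along the two implications, and my strategy for the (harder) sufficiency direction is to exploit the fibered definition of the $\mathcal{A}_{\mu}^p$-norm as a vertical integral of $\mathcal{H}^p$-norms and reduce everything to Bayart's theorem on Hardy spaces. The key identity is that for a Dirichlet polynomial $P$ and $\sigma>0$,
\[ (P\circ\Phi)_{\sigma}(s) \,=\, P\bigl(\Phi(s+\sigma)\bigr) \,=\, P_{c_0\sigma}\bigl(\Psi_{\sigma}(s)\bigr),\qquad \Psi_{\sigma}(s) := c_0 s + \varphi(s+\sigma), \]
because $\Phi(s+\sigma)=c_0 s+c_0\sigma+\varphi(s+\sigma)$. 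Raising this to the power $p$, taking the $\mathcal{H}^p$-norm and integrating against $\mu$ turns $\|C_\Phi P\|_{\mathcal{A}_{\mu}^p}^p$ into $\int_0^{+\infty}\|P_{c_0\sigma}\circ\Psi_{\sigma}\|_{\mathcal{H}^p}^p\,d\mu(\sigma)$, so it suffices to control the integrand uniformly in $\sigma$.

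Assuming $\varphi$ converges uniformly on each $\mathbb{C}_{\varepsilon}$ with $\varphi(\mathbb{C}_+)\subset\mathbb{C}_+$, each shifted symbol $\Psi_{\sigma}$ inherits these two properties (note $\varphi(s+\sigma)$ converges uniformly on $\mathbb{C}_{\varepsilon-\sigma}\supset\mathbb{C}_\varepsilon$ and $\Psi_\sigma(\mathbb{C}_+)\subset c_0\mathbb{C}_+ + \varphi(\mathbb{C}_\sigma)\subset\mathbb{C}_+$), so Bayart's theorem on $\mathcal{H}^p$ yields the contractive bound $\|P_{c_0\sigma}\circ\Psi_{\sigma}\|_{\mathcal{H}^p}\le\|P_{c_0\sigma}\|_{\mathcal{H}^p}$. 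Combining this with the fact that $\tau\mapsto\|P_\tau\|_{\mathcal{H}^p}$ is non-increasing on $[0,+\infty)$ together with $c_0\ge 1$ gives $\|P_{c_0\sigma}\|_{\mathcal{H}^p}\le\|P_\sigma\|_{\mathcal{H}^p}$. Integrating over $\mu$ then produces the contraction estimate $\|C_\Phi P\|_{\mathcal{A}_{\mu}^p}\le\|P\|_{\mathcal{A}_{\mu}^p}$, and the density of $\mathcal{P}$ in $\mathcal{A}_{\mu}^p$ extends it to the whole space.

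For the necessity, assuming $C_\Phi$ is bounded, I would test against the monomials $P_n(s)=n^{-s}$: the images $n^{-\Phi(s)}=n^{-c_0 s}\cdot n^{-\varphi(s)}$ must lie in $\mathcal{A}_{\mu}^p$, hence (by the embedding into Dirichlet series convergent on $\mathbb{C}_{1/2}$ established in \cite{bailleullefevre2013}) be represented by such Dirichlet series. Adapting the Gordon--Hedenmalm--Bayart machinery, one extracts uniform convergence of $\varphi$ on each $\mathbb{C}_\varepsilon$. Once $\varphi$ is known to extend analytically to $\mathbb{C}_+$, the inclusion $\varphi(\mathbb{C}_+)\subset\mathbb{C}_+$ is read off from $\Phi(\mathbb{C}_{1/2})\subset\mathbb{C}_{1/2}$ by letting $\Re(s)\to(1/2)^+$, combined with a normal-family argument and the fact that $c_0\ge 1$.

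The principal obstacle is clearly the necessity step: extracting the \emph{uniform} convergence of $\varphi$ on every $\mathbb{C}_\varepsilon$ from $\mathcal{A}_{\mu}^p$-boundedness alone, a control weaker than the $\mathcal{H}^p$-control used in the classical Hardy setting. The natural workaround is a translation trick: the assumption $0\in\mathrm{Supp}(\mu)$ allows one to absorb the weight on small intervals near $0$ into a Hardy-type estimate on a suitable $\mathbb{C}_{\varepsilon_0}$, at which point the Gordon--Hedenmalm--Bayart arguments apply verbatim on the translated scale.
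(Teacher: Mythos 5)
This theorem is imported from \cite{bailleulcompo} and the paper under review gives no proof of it, so there is no internal argument to compare against; I can only judge your proposal on its own terms. The sufficiency half is correct and is essentially the standard route. Your factorisation $(P\circ\Phi)_\sigma=P_{c_0\sigma}\circ\Psi_\sigma$ with $\Psi_\sigma(s)=c_0s+\varphi(s+\sigma)$ is a minor variant of the decomposition the paper itself records in Section~2, where one writes $P\circ\Phi_\sigma=P_\sigma\circ(\Phi_\sigma-\sigma)$ and observes (the ``Schwarz lemma'' remark) that $\Phi_\sigma-\sigma$ is an $\mathcal{H}^p$-symbol; both versions reduce the claim fiberwise to Bayart's contraction theorem on $\mathcal{H}^p$, and your additional use of the monotonicity of $\tau\mapsto\Vert P_\tau\Vert_{\mathcal{H}^p}$ correctly absorbs the case $c_0>1$. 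The one point you should make explicit is that $P\circ\Phi$ is not a Dirichlet polynomial, so the identity $\Vert P\circ\Phi\Vert_{\mathcal{A}_\mu^p}^p=\int_0^{+\infty}\Vert(P\circ\Phi)_\sigma\Vert_{\mathcal{H}^p}^p\,d\mu(\sigma)$ is not the definition of the norm but a theorem (it is exactly what this paper invokes as \cite{bailleullefevre2013}, Th.~6, in its own proof of $(iii)\Rightarrow(iv)$).

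The necessity half, however, contains a genuine gap, which you flag yourself as ``the principal obstacle'' without closing it. Two things are missing. First, extracting $\sigma_u(\varphi)\le 0$ from the mere fact that $n^{-\Phi}\in\mathcal{A}_\mu^p$ (hence only $\sigma_b(n^{-\Phi})\le 1/2$) is the technical heart of the Gordon--Hedenmalm--Bayart necessity argument; ``adapting the machinery'' plus an unspecified ``translation trick'' is a pointer, not a proof, and the Bergman norm controls $\Vert(n^{-\Phi})_\sigma\Vert_{\mathcal{H}^p}$ only for $\mu$-almost every $\sigma$, so the reduction to the Hardy-space argument is precisely what has to be justified. Second, and more concretely, your derivation of $\varphi(\mathbb{C}_+)\subset\mathbb{C}_+$ fails as stated: from $\Phi(\mathbb{C}_{1/2})\subset\mathbb{C}_{1/2}$ one only gets $\Re\varphi(s)>\frac{1}{2}-c_0\Re(s)$, and letting $\Re(s)\to(1/2)^+$ yields $\Re\varphi\ge\frac{1-c_0}{2}$, which is $\le 0$ whenever $c_0\ge 1$ and therefore gives no information. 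The positivity of $\Re\varphi$ on $\mathbb{C}_+$ has to come from the almost-periodicity and vertical-limit arguments of \cite{gordon1999composition} (their Proposition~4.2 is the very result this paper leans on in its Lemmas~1 and~3), applied after uniform convergence of $\varphi$ on every $\mathbb{C}_\varepsilon$ has been established. Without those two inputs your necessity argument does not close.
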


\begin{theo}[\cite{bailleulcompo},Th2] Let $ \Phi: \mathbb{C}_{ \frac{1}{2}} \rightarrow \mathbb{C}_{ \frac{1}{2}}$ be in $\mathcal{D}$. Then
\begin{enumerate}[(i)]
\item
If $C_{\Phi}$ is bounded on $\mathcal{A}_{\mu}^p$ then $ \Phi$ converges uniformly in $ \mathbb{C}_{\varepsilon}$ for every $ \varepsilon>0$ and $ \Phi( \mathbb{C}_+) \subset \mathbb{C}_{1/2}$.
\item
If  $\Phi$ converges uniformly in $ \mathbb{C}_{\varepsilon}$ for every $ \varepsilon>0$ and $ \Phi( \mathbb{C}_+) \subset \mathbb{C}_{1/2+ \eta}$ with some $\eta>0$ then $C_{\Phi}$ is bounded on $\mathcal{A}_{\mu}^2$.
\end{enumerate}
\end{theo}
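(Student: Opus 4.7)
The plan is to split the proof along the two halves of the statement and, in both cases, to exploit the ``averaged'' description of $\mathcal{A}_{\mu}^p$ as an integral of translates of $\mathcal{H}^p$, combined with the known Gordon--Hedenmalm / Bayart characterization of bounded composition operators on $\mathcal{H}^p$.

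For part (i), the first task is to extract the mapping property $\Phi(\mathbb{C}_+)\subset\mathbb{C}_{1/2}$. I would fix $s_0\in\mathbb{C}_+$ and use that point evaluation at $s_0+\sigma$ is a bounded functional on $\mathcal{A}_{\mu}^p$ whenever $\sigma>0$ (since for such $\sigma$ the translate $f_{\sigma}$ lies in $\mathcal{H}^p$, hence in the Dirichlet half-plane $\mathbb{C}_{1/2}$). Testing $C_{\Phi}$ against a Dirichlet polynomial $P$ and using that $(P\circ\Phi)(s_0+\sigma)=P(\Phi(s_0+\sigma))$ shows that $\Phi(s_0+\sigma)$ has to lie in $\mathbb{C}_{1/2}$ for almost every $\sigma>0$, and the assumption $0\in\mathrm{Supp}(\mu)$ combined with the continuity of $\Phi$ lets me send $\sigma\to 0$ and conclude that $\Phi(s_0)\in\overline{\mathbb{C}_{1/2}}$; the maximum principle, applied to $\mathrm{Re}\,\Phi-1/2$, then upgrades this to the strict inclusion. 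For the uniform convergence in each $\mathbb{C}_\varepsilon$, I would apply the $\mathcal{H}^p$ characterization of Bayart to the translate $\Phi_{\sigma}$, which is the symbol of the operator $C_{\Phi_{\sigma}}$ acting between appropriate $\mathcal{H}^p$ spaces; indeed the composition $C_{\Phi}$ being bounded on $\mathcal{A}_{\mu}^p$ forces that for $\mu$-a.e. $\sigma$ the operator $f\mapsto f\circ \Phi_{\sigma}$ is bounded $\mathcal{H}^p\to\mathcal{H}^p$, yielding uniform convergence of $\Phi_{\sigma}$ in every half-plane $\mathbb{C}_{\varepsilon}$, which by translation transfers to $\Phi$ itself on $\mathbb{C}_{\varepsilon'}$ with $\varepsilon'=\varepsilon+\sigma$ arbitrarily close to any prescribed positive level.

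For part (ii), the hypothesis gives a strict $\eta$--gap: $\Phi(\mathbb{C}_+)\subset\mathbb{C}_{1/2+\eta}$. Starting from the very definition
\[
\|C_{\Phi} f\|_{\mathcal{A}_{\mu}^2}^{2}=\int_{0}^{+\infty}\|(f\circ\Phi)_{\sigma}\|_{\mathcal{H}^2}^{2}\,d\mu(\sigma),
\]
I would write $(f\circ\Phi)_{\sigma}=f_{\eta/2}\circ(\Phi_{\sigma}-\eta/2)$. The translated symbol $\Phi_{\sigma}-\eta/2$ sends $\mathbb{C}_+$ into $\mathbb{C}_{1/2+\eta/2}$, so Bayart's Hardy-space theorem supplies a uniform bound $\|g\circ(\Phi_{\sigma}-\eta/2)\|_{\mathcal{H}^2}\le M\|g\|_{\mathcal{H}^2}$ with $M$ independent of $\sigma$. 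Applying this to $g=f_{\eta/2}$ reduces the problem to showing that $\int_{0}^{+\infty}\|f_{\eta/2}\|_{\mathcal{H}^2}^{2}\,d\mu(\sigma)\lesssim \|f\|_{\mathcal{A}_{\mu}^2}^{2}$, which is immediate because $\|f_{\eta/2}\|_{\mathcal{H}^2}$ does not depend on $\sigma$ at all and is controlled by the Bergman norm (one uses that $\mu$ is a probability measure and that $\eta/2>0$ together with the embedding $\mathcal{A}_{\mu}^2\hookrightarrow\mathcal{H}^2(\mathbb{C}_{\eta/2})$).

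The main obstacle is clearly part (i), and in particular the uniform convergence of $\Phi$ in every $\mathbb{C}_{\varepsilon}$. In the Hardy-space situation one reads off coefficient information directly from boundary values, but the Bergman norm adds a further average over $\sigma$, so isolating a single ``good'' $\sigma$ where the $\mathcal{H}^p$ theory applies requires a careful Fubini-type argument together with the support condition $0\in\mathrm{Supp}(\mu)$. Making the limit $\sigma\to 0$ preserve uniform convergence, rather than just a.e.\ or pointwise convergence, is the delicate step, and it is there that the finer structure of Dirichlet series (Bohr's lift and Kronecker equidistribution on $\mathbb{T}^{\infty}$) has to be invoked.
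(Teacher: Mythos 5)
This statement is Theorem~2 of the paper, which is imported from \cite{bailleulcompo} and stated without proof; there is therefore no internal proof to compare your argument against, and I can only judge the proposal on its own terms. Your part (ii) is essentially the standard and correct argument: the factorization $(f\circ\Phi)_{\sigma}=f_{\eta/2}\circ(\Phi_{\sigma}-\eta/2)$, a Hardy-space bound for $C_{\Phi_{\sigma}-\eta/2}$, and the coefficient estimate $n^{-\eta}\lesssim w_h(n)$ (valid because $0\in\mathrm{Supp}(\mu)$), which gives $\Vert f_{\eta/2}\Vert_{\mathcal{H}^2}\lesssim\Vert f\Vert_{\mathcal{A}_{\mu}^2}$ and explains why the statement is restricted to $p=2$. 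You should still justify why the Hardy-space bound is uniform in $\sigma$: the symbols $\Phi_{\sigma}-\eta/2$ vary with $\sigma$, but their zeroth Dirichlet coefficient $c_1-\eta/2$ does not, and $\Re(c_1)\geq 1/2+\eta$, so the quantitative Gordon--Hedenmalm bound in terms of $\zeta(2\Re(c_1-\eta/2))$ is independent of $\sigma$.

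Part (i), which you yourself identify as the hard half, contains two genuine errors. First, point evaluation at $s_0+\sigma$ for $s_0\in\mathbb{C}_+$ and small $\sigma>0$ is \emph{not} a bounded functional on $\mathcal{A}_{\mu}^p$: these spaces consist of functions analytic only on $\mathbb{C}_{1/2}$ (one has $\sigma_b(f)\leq 1/2$ and this is sharp), and $\Vert\delta_s\Vert$ blows up as $\Re(s)\to 1/2^{+}$. Your justification also points the wrong way: $f_{\sigma}\in\mathcal{H}^p$ gives bounded evaluation of $f$ on $\mathbb{C}_{1/2+\sigma}$, a \emph{smaller} half-plane, not a larger one. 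Second, testing $C_{\Phi}$ against Dirichlet polynomials cannot produce the mapping property $\Phi(\mathbb{C}_+)\subset\mathbb{C}_{1/2}$, because polynomials are entire: $P\circ\Phi$ is defined and controlled wherever $\Phi$ is, no matter where $\Phi$ takes its values. The standard mechanism is to test against functions whose evaluation norm blows up on the line $\Re(s)=1/2$ (reproducing kernels, translates of $\zeta$) and then to descend from $\mathbb{C}_{1/2}$ to $\mathbb{C}_+$ via vertical translations and Kronecker's theorem, as in Gordon--Hedenmalm. Similarly, the assertion that boundedness of $C_{\Phi}$ on $\mathcal{A}_{\mu}^p$ forces $f\mapsto f\circ\Phi_{\sigma}$ to be bounded on $\mathcal{H}^p$ for a.e.\ $\sigma$ does not follow: the inequality $\int_0^{+\infty}\Vert f\circ\Phi_{\sigma}\Vert_{\mathcal{H}^p}^p\,d\mu(\sigma)\leq C\Vert f\Vert_{\mathcal{A}_{\mu}^p}^p$ only gives, for each fixed $f$, finiteness off an $f$-dependent null set, and it controls the integral by the Bergman norm of $f$, not $\Vert f\circ\Phi_{\sigma}\Vert_{\mathcal{H}^p}$ by $\Vert f\Vert_{\mathcal{H}^p}$. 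The uniform convergence of $\Phi$ is normally extracted by applying $C_{\Phi}$ to $2^{-s}$, observing that $2^{-\Phi}$ is then a somewhere-convergent, bounded Dirichlet series, and invoking Bohr's theorem; that step is absent from your sketch.
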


In the sequel, we assume that $\mu$ is a probability measure on $(0, + \infty)$ such that $d \mu( \sigma) =h( \sigma) d \sigma$ where $h$ is a positive continuous function.

\begin{ex}
Let $\alpha>-1$, we denote $\mu_{\alpha}$ the probability measure defined on $(0, + \infty)$ by
\[ d\mu_{\alpha}( \sigma) = \frac{2^{\alpha+1}}{\Gamma(\alpha+1)} \sigma^{\alpha} e^{-2 \sigma} \,  d \sigma.\]
We denote the corresponding space $\mathcal{A}_{\alpha}^p$ instead of $ \mathcal{A}_{\mu_{\alpha}}^p$.
\end{ex}

\begin{theoreme}{\label{Thprincipal}}
Let $1 \leq p < + \infty$ and $C_{\Phi}$ be a bounded composition operator on $\mathcal{A}_{\mu}^p$. The following assertions are equivalent:
\begin{enumerate}[(i)]
\item $C_{\Phi}$ is inversible.
\item $C_{\Phi}$ is Fredholm.
\item $C_{\Phi}$ is an isometry.
\item $\Phi$ is a vertical translation: there exists $\tau \in \mathbb{R}$ such that for every $s \in \mathbb{C}_+$, $\Phi(s)=s+i \tau$.
\end{enumerate}
\end{theoreme}

We point out that the result is false on the spaces $\mathcal{H}^p$: F. Bayart proved that $(i),(ii),(iii)$ are still equivalent on $\mathcal{H}^p$ but obtained a different characterization for the isometric composition operators on $\mathcal{H}^p$ (see \cite{bayart2002hardy}). For example, if $\Phi$ is defined for every $s \in \mathbb{C}_+$ by $\Phi(s)=c_0s$ with $c_0 \geq2$, then $C_{\Phi}$ is an isometry on $\mathcal{H}^p$ but not on $\mathcal{A}_{\mu}^p$. The same phenomenon appears in the framework of composition operators on the unit disk (see \cite{martin2006isometries}). \\

Ir order to prove the main theorem, it suffices to show that $(ii) \Rightarrow (iv)$ and $(iii) \Rightarrow (iv)$. Indeed, $(i) \Rightarrow (ii)$, $(iv) \Rightarrow (i)$ and $(iv) \Rightarrow (iii)$ are clear. 

\section{Background material}
Let $f$ be a Dirichlet series of form $(1)$. We do not recall the definition of abscissa of simple (resp. absolute) convergence denoted by $\sigma_c$ (resp. $\sigma_a$), see \cite{queffelecBook} or \cite{tenenbaum1995introductiona} for more details. We shall need the two other following abscissas:
\[ \left. \begin{array}{ccl}
\sigma_u(f) & = &\inf \lbrace a \, | \hbox{ The series} \,(1) \hbox{ is uniformly convergent for } \Re(s)>a \rbrace   \\
            & = & \hbox{abscissa of uniform convergence of } f.   \\
            &   &                                                 \\   
\sigma_b(f) & = &\inf \lbrace a \, | \hbox{ the function } f \hbox{ has an analytic, bounded extension for } \Re(s)>a \rbrace   \\
            & = & \hbox{abscissa of boundedness of } f.  
\end{array}\right. \]

It is easy to see that $\sigma_c(f) \leq \sigma_u(f) \leq \sigma_a(f)$. An important result is that $\sigma_u(f)$ and $\sigma_b(f)$ coincide: this is the Bohr's theorem (see \cite{bohr1913uber}). This result is really useful for the study of $ \mathcal{H}^{ \infty}$, the algebra of bounded Dirichlet series on the right half-plane $\mathbb{C}_+$ (see \cite{maurizi2010some}). We shall denote by $ \Vert \,\cdot \,\Vert_{ \infty}$ the norm on this space:
\[ \Vert f \Vert_{ \infty}:= \sup_{ \Re(s)>0} \vert f(s) \vert. \]

We shall make a crucial use of the point evaluation in the proof of the Main Theorem: for every $p \geq 1$, the spaces $\mathcal{H}^p$ and $\mathcal{A}_{\mu}^p$ are spaces of holomorphic functions on $\mathbb{C}_{1/2}$ and more precisely if $\delta_s$ is the operator of point evaluation at $s \in \mathbb{C}_{1/2}$, then by \cite{bayart2002hardy},Th3:
\[ \Vert \delta_s \Vert_{(\mathcal{H}^p)^*} = \zeta(2 \Re(s)) \]
and by \cite{bailleullefevre2013},Th1 the point evaluation is also bounded on the spaces $\mathcal{A}_{\mu}^p$. Moreover $\sigma_b(f) \leq 1/2$ for any $f \in \mathcal{A}_{\mu}^p$. For example when $\mu= \mu_{\alpha}$, it is shown in \cite{bailleullefevre2013},Cor1 that there exists a positive constant $c_{\alpha,p}$ such that for every $s \in \mathbb{C}_{1/2}$,
\[ \Vert \delta_s \Vert_{ ( \mathcal{A}_{\alpha}^p )^*} \leq c_{\alpha,p} \bigg{ ( } \frac{\Re(s)}{2\Re(s)-1} \bigg{)}^{ \frac{2+\alpha}{p}} \cdot \]
When $p=2$, $\mathcal{A}_{\mu}^2$ is a Hilbert space and it is easy to see that
\[ \Vert f \Vert_{ \mathcal{A}_{\mu}^2} =  \bigg{(} \sum_{n=1}^{ + \infty} \vert a_n \vert^2 w_h(n) \bigg{)}^{1/2} \]
where for every $n \geq 1$,
\[ w_h(n) = \int_{0}^{ + \infty} n^{-2 \sigma} h( \sigma) d \sigma. \]
Thanks of the boundedness of the point evaluation at $s \in \mathbb{C}_{1/2}$, we consider the following reproducing kernels defined for every $w \in \mathbb{C}_{1/2}$ by
\[ K_{\mu}(s,w)= \sum_{n=1}^{+\infty} \frac{n^{- \overline{s}-w}}{w_h(n)} \cdot \]
For every $f \in \mathcal{A}_{\mu}^2$ and $s \in \mathbb{C}_{1/2}$, one has
\[ f(s) = < f, K_{\mu}(s, \, \cdot \, )>_{\mathcal{A}_{\mu}^2} \cdot \]

\begin{ex}
On the space $\mathcal{A}_{\alpha}^2$, we simply denote $(w_n^{\alpha})$ the corresponding weight and then for every $n \geq 1$,
\[ w_{n}^{\alpha} = \frac{1}{(\log(n)+1)^{\alpha+1}} \cdot \]
\end{ex}

Let $ \Phi: \mathbb{C}_{ \frac{1}{2}} \rightarrow \mathbb{C}_{ \frac{1}{2}}$ be an analytic function such that $\Phi(s)=c_0s+ \varphi(s)$ where $c_0$ is a nonnegative integer and $\varphi \in \mathcal{D}$. We shall say that $\Phi$ is a symbol if $C_{\Phi}$ is bounded on the spaces $\mathcal{A}_{\mu}^p$. \\

For $\sigma>0$, we denote $\Phi_{\sigma}$ the translate of $\Phi$ by $\sigma$: $\Phi_{\sigma}(s):= \Phi( \sigma+s)$. \\

When $c_0 \geq 1$, thanks to the Theorem $1$ we know that $\Phi$ is a symbol if and only $\varphi$ converges uniformly on $\mathbb{C}_{\varepsilon}$ for every $\varepsilon>0$ and $\varphi( \mathbb{C}_+) \subset \mathbb{C}_+$. In this case, it is easy to see that for every $\sigma>0$, $\Phi_{\sigma}- \sigma$ is also a symbol: indeed let $\sigma>0$ and $s \in \mathbb{C}_+$, then
\[ \begin{array}{ccl}
\Re( \Phi_{\sigma}(s)- \sigma)& =& \Re(c_0( \sigma+s)) + \Re( \varphi( \sigma+s)) - \sigma \\
& > & \sigma(c_0-1) + \Re(s)>0  \\
\end{array} \]
because $\varphi( \mathbb{C}_+) \subset \mathbb{C}_+$ and $s \in \mathbb{C}_+$. Point out that this result can be seen as the Schwarz's lemma in this framework. \\


\section{Proof of $(ii) \Rightarrow (iv)$}
With help of Proposition 4.2 from \cite{gordon1999composition}, F. Bayart proved the following useful lemma.

\begin{lemme}[\cite{bayart2002hardy},Lem11]
Let $\Phi$ be a symbol. If $\Phi$ is not a vertical translation then there exists $\varepsilon$ and $\eta>0$ such that
\[ \Phi( \mathbb{C}_{1/2- \varepsilon}) \subset \mathbb{C}_{1/2 + \eta}. \]
\end{lemme}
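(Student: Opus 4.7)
I would split cases according to the nonnegative integer $c_0$ in the decomposition $\Phi(s)=c_0 s+\varphi(s)$.

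\textbf{Case $c_0\geq 2$.} Theorem 1 combined with Bohr's theorem ($\sigma_u=\sigma_b$) gives a bounded analytic extension of $\varphi$ to $\mathbb{C}_+$ with $\varphi(\mathbb{C}_+)\subset\overline{\mathbb{C}_+}$. For $\Re(s)>1/2-\varepsilon$,
\[
\Re(\Phi(s))=c_0\Re(s)+\Re(\varphi(s))>c_0(1/2-\varepsilon)\geq 1-c_0\varepsilon,
\]
which exceeds $1/2+\eta$ for any $\eta<1/2-c_0\varepsilon$ as soon as $\varepsilon$ is small enough (and the hypothesis that $\Phi$ is not a vertical translation is automatic here).

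\textbf{Cases $c_0=1$ and $c_0=0$.} Both reduce to the same sub-claim: if $\psi$ is a bounded Dirichlet series on $\mathbb{C}_+$ with $\Re(\psi)\geq 0$ and $\psi$ is not a pure imaginary constant, then there exist $\varepsilon,\eta>0$ with $\Re(\psi(s))\geq\eta$ for $\Re(s)>1/2-\varepsilon$. For $c_0=1$ take $\psi=\varphi$ (via Theorem 1 and Bohr's theorem); for $c_0=0$ take $\psi=\Phi-1/2$ (via Theorem 2(i) and Bohr's theorem), disposing separately of the trivial case $\Phi\equiv\alpha$ constant, where $\Phi(\mathbb{C}_+)\subset\mathbb{C}_{1/2}$ gives $\Re(\alpha)>1/2$ and the lemma follows with $\eta=\Re(\alpha)-1/2$. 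Granting the sub-claim, $\Re(\Phi(s))>(1/2-\varepsilon)+\eta$ for $c_0=1$ (so pick $\varepsilon<\eta/2$) and $\Re(\Phi(s))\geq 1/2+\eta$ for $c_0=0$.

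For the sub-claim, the crucial step is to show that the constant term $a_1$ of $\psi$ satisfies $\Re(a_1)>0$. Using Bohr's correspondence, $\psi$ lifts to a bounded holomorphic function $F$ on the infinite polydisc with $F(0)=a_1$ and $\Re(F)\geq 0$. Restricting to finite-dimensional slices $(z_1,\ldots,z_n,0,0,\ldots)$ and applying the minimum principle for nonnegative pluriharmonic functions iteratively, $\Re(a_1)=0$ would force every nonconstant coefficient of $\psi$ to vanish, contradicting $\psi\not\equiv ia_1$. Once $\Re(a_1)>0$, on each vertical line $\{\Re(s)=\sigma\}$ the function $t\mapsto\psi(\sigma+it)$ is uniformly almost-periodic (as a uniform limit on $\mathbb{C}_\sigma$ of Dirichlet polynomials) with strictly positive real part, so a standard almost-periodicity argument gives $\delta(\sigma):=\inf_t\Re(\psi(\sigma+it))>0$; continuity of $\delta$ on $(0,\infty)$ and $\delta(\sigma)\to\Re(a_1)>0$ as $\sigma\to\infty$ yield a uniform lower bound on any half-interval $(1/2-\varepsilon,\infty)$. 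The main obstacle is the step $\Re(a_1)>0$, which genuinely uses the Dirichlet-series structure through the polydisc lift: a generic bounded analytic $\mathbb{C}_+\to\overline{\mathbb{C}_+}$ map such as $s\mapsto 1/(1+s)$ is non-constant yet has $\inf\Re=0$, so there is no abstract harmonic-analytic shortcut bypassing the Bohr lift.
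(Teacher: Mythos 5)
The paper does not actually prove this lemma: it imports it verbatim from Bayart (\cite{bayart2002hardy}, Lemma 11), whose proof in turn rests on Proposition 4.2 of \cite{gordon1999composition}. Your ``sub-claim'' --- a Dirichlet series $\psi$ with $\Re\psi\ge 0$ on $\mathbb{C}_+$, not an imaginary constant, satisfies $\inf_{\mathbb{C}_\theta}\Re\psi>0$ for every $\theta>0$ --- \emph{is} that proposition, so you are reproving the black box rather than citing it. Your case analysis on $c_0$, the reduction of the $c_0=0$ case to $\psi=\Phi-1/2$ via Theorem 2(i), and the Bohr-lift/minimum-principle argument for $\Re(a_1)>0$ are all sound in outline (modulo one bookkeeping point: in the $c_0=1$ case $\eta$ depends on $\varepsilon$, so you should first fix a half-plane, e.g. set $\eta_0=\inf_{\mathbb{C}_{1/4}}\Re\varphi$, and only then choose $\varepsilon=\min(1/4,\eta_0/2)$).

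The genuine gap is the passage from $\Re(a_1)>0$ to $\delta(\sigma):=\inf_t\Re\psi(\sigma+it)>0$. You justify it by saying that $t\mapsto\psi(\sigma+it)$ is uniformly almost periodic with strictly positive real part, ``so a standard almost-periodicity argument'' gives a positive infimum. That implication is false for almost periodic functions in general: $t\mapsto 2+\cos t+\cos(\sqrt{2}\,t)$ is uniformly almost periodic and strictly positive everywhere, yet its infimum is $0$ by Kronecker's theorem. What makes the step true here is the harmonicity of $\Re\psi$ on $\mathbb{C}_+$, not its almost periodicity. Two standard repairs: (a) Harnack's inequality on the discs $D(\sigma''+it,\sigma'')$ gives $\Re\psi(\sigma+it)\ge c(\sigma,\sigma'')\,\Re\psi(\sigma''+it)$ uniformly in $t$, and uniform convergence of the series makes $\Re\psi(\sigma''+it)\ge\Re(a_1)/2$ for $\sigma''$ large; or (b) if $\Re\psi(\sigma+it_n)\to 0$, extract by normal families a vertical limit function $\tilde\psi$ with $\Re\tilde\psi\ge 0$ vanishing at an interior point, so $\Re\tilde\psi\equiv 0$ by the minimum principle, contradicting $\tilde\psi(\sigma')\to a_1$ as $\sigma'\to+\infty$. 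Either fix is short, so your architecture survives, but as written the key quantitative step has no valid justification.
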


\begin{proof}[Proof of $(ii) \Rightarrow (iv)$]
We follow ideas from \cite{bayart2002hardy},Th14. Assume that $\Phi$ is not a vertical translation. By the previous lemma, there exists $\varepsilon$ and $\eta>0$ such that
\[ \Phi( \mathbb{C}_{1/2- \varepsilon}) \subset \mathbb{C}_{1/2 + \eta}.\]
We remark that each element of Im($C_{\Phi}$) is defined and bounded on $\mathbb{C}_{1/2- \varepsilon}$: indeed $\Phi (\mathbb{C}_{1/2- \varepsilon}) \subset \mathbb{C}_{1/2+ \eta}$ and if $f \in \mathcal{A}_{\mu}^p$, $f$ is bounded on $\mathbb{C}_{1/2 + \eta}$ (because $\sigma_b(f) \leq 1/2$).

Now by lemma \cite{bayart2002hardy},Lem9 we know that there exists $f \in \mathcal{H}^p$ such that the line $\Re(s)=1/2$ is both abscissa of convergence and natural boundary for $f$. Because of the inclusion $\mathcal{H}^p \subset \mathcal{A}_{\mu}^p$, $f$ belongs to $\mathcal{A}_{\mu}^p$. We consider the following infinite dimensional subspace of $\mathcal{A}_{\mu}^p$:
\[ F = \hbox{span} \lbrace n^{-s} f, \, n \geq 1 \rbrace = f \mathcal{P}. \]
We shall show that $F \cap Im( C_{\Phi}) = \lbrace 0 \rbrace$ and consequently Codim(Im($C_{\Phi}$))=$+ \infty$ which is a contradiction with $(ii)$.

Let $h \in F \cap Im( C_{\Phi})$, there exists $P \in \mathcal{P}$ such that $h=Pf$. If $h \neq 0$, there exists $s_0$ such that $\Re(s_0)=1/2$ and $P(s_0) \neq 0$. But in this case, $f$ extends beyond $\mathbb{C}_{1/2}$ and then we obtain a contradiction because the line $\Re(s)=1/2$ is a natural boundary for $f$. Finally $F \cap Im( C_{\Phi}) = \lbrace 0 \rbrace$.
\end{proof}

\section{Proof of $(iii) \Rightarrow (iv)$}
First we shall show that if $C_{\Phi}$ is an isometry then $c_0 \geq 1$. We need the following result.
\begin{lemme}

$  \dis\lim_{\Re(s) \rightarrow + \infty } \Vert \delta_{s} \Vert_{ (\mathcal{A}_{\mu}^1)^*} = 1.$

\end{lemme}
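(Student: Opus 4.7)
I aim to prove both bounds: the lower bound $\|\delta_s\|_{(\mathcal{A}_\mu^1)^*} \geq 1$ (valid for every $s \in \mathbb{C}_{1/2}$) and the asymptotic upper bound $\limsup_{\Re(s)\to+\infty} \|\delta_s\|_{(\mathcal{A}_\mu^1)^*} \leq 1$. The lower bound is immediate: testing $\delta_s$ on the constant function $1$ gives $|1|=1$ and $\|1\|_{\mathcal{A}_\mu^1} = \int_0^{+\infty} \|1\|_{\mathcal{H}^1}\,d\mu(\sigma) = 1$ since $\mu$ is a probability measure. The work is in the upper bound, for which I propose to combine the translation identity $f(s) = f_\sigma(s-\sigma)$ with the known behaviour of point evaluations on $\mathcal{H}^1$.

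\textbf{Key step.} Fix $f \in \mathcal{A}_\mu^1$ with $\|f\|_{\mathcal{A}_\mu^1} \leq 1$ and set $u := \Re(s)$. For every $\sigma \in (0, u - \tfrac{1}{2})$ the translate $f_\sigma$ belongs to $\mathcal{H}^1$ (by density of Dirichlet polynomials together with the definition of $\mathcal{A}_\mu^1$), and $s-\sigma$ lies in $\mathbb{C}_{1/2}$, so the background bound on $\mathcal{H}^1$ point evaluations yields
$$|f(s)| = |f_\sigma(s-\sigma)| \leq C(u-\sigma)\,\|f_\sigma\|_{\mathcal{H}^1},$$
where $C(\tau) := \|\delta_w\|_{(\mathcal{H}^1)^*}$ for any $w$ with $\Re(w)=\tau$; this quantity depends only on $\tau$ by vertical translation invariance and, by the background section, equals $\zeta(2\tau)$, in particular tending to $1$ (monotonically) as $\tau \to +\infty$.

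\textbf{Integration.} Assume $u > 1$ and set $T = u/2$. For every $\sigma \in [0,T]$ one has $u-\sigma \geq u/2$, hence $C(u-\sigma) \leq C(u/2)$. Integrating the previous inequality against $d\mu$ over $[0,T]$ gives
$$|f(s)|\,\mu([0,T]) \leq C(u/2) \int_0^{T} \|f_\sigma\|_{\mathcal{H}^1}\,d\mu(\sigma) \leq C(u/2)\,\|f\|_{\mathcal{A}_\mu^1},$$
so, taking the supremum over $\|f\|_{\mathcal{A}_\mu^1} \leq 1$,
$$\|\delta_s\|_{(\mathcal{A}_\mu^1)^*} \leq \frac{C(u/2)}{\mu([0,u/2])}.$$
As $u \to +\infty$, both $C(u/2) \to 1$ and $\mu([0,u/2]) \to 1$ (the latter because $\mu$ is a probability measure on $(0,+\infty)$), which yields $\limsup_{u \to +\infty} \|\delta_s\|_{(\mathcal{A}_\mu^1)^*} \leq 1$ and completes the proof when combined with the lower bound.

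\textbf{Main obstacle.} The only non-routine point is justifying that $f_\sigma$ belongs to $\mathcal{H}^1$ with $\sigma\mapsto\|f_\sigma\|_{\mathcal{H}^1}$ integrable against $d\mu$ and contributing to the $\mathcal{A}_\mu^1$-norm in the expected way; this is a standard density consequence of the definition of $\mathcal{A}_\mu^1$ as the completion of Dirichlet polynomials, which I would invoke without detail. Everything else is bookkeeping: splitting the integral at $T=u/2$ so that both $C(u-\sigma)\to 1$ uniformly on $[0,T]$ and the remaining mass $\mu([0,T])$ approaches $1$.
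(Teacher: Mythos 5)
Your argument is correct, but it follows a genuinely different route from the paper's. The paper proves the upper bound through the reproducing kernel of the Hilbert space $\mathcal{A}_{\mu}^2$: it represents $P(s)$ as an integral of $P$ against $\overline{K_{\mu}(s,\cdot)}$, deduces $|P(s)|\le \|P\|_{\mathcal{A}_{\mu}^1}\,\|K_{\mu}(s,\cdot)\|_{\infty}$, and then bounds the sup norm of the kernel by $\sum_{n\ge1} n^{-\Re(s)}/w_h(n)$, which tends to $1/w_h(1)=1$ as $\Re(s)\to+\infty$. You instead exploit the fibered structure of the norm directly: the identity $f(s)=f_{\sigma}(s-\sigma)$, the exact value $\|\delta_w\|_{(\mathcal{H}^1)^*}=\zeta(2\Re(w))$, and an averaging of the resulting pointwise bound over $\sigma\in[0,u/2]$ against $d\mu$. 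Your version is arguably more elementary and more robust: it never introduces the $\mathcal{A}_{\mu}^2$ kernel, and it does not need the convergence of $\sum_n n^{-\Re(s)}/w_h(n)$ (which itself requires a short argument lower-bounding $w_h(n)$); all you use is that $\mu$ is a probability measure, so $\mu([0,u/2])\to1$, together with $\zeta(2\tau)\downarrow 1$. The paper's version buys a one-line pointwise bound with no splitting of the integral. One remark on what you flag as the main obstacle: you can sidestep it entirely by establishing the estimate only for Dirichlet polynomials $P$, for which $P_{\sigma}\in\mathcal{H}^1$ and $\|P\|_{\mathcal{A}_{\mu}^1}=\int_0^{+\infty}\|P_{\sigma}\|_{\mathcal{H}^1}\,d\mu(\sigma)$ hold by definition; since the unit ball of $\mathcal{P}$ is dense in that of $\mathcal{A}_{\mu}^1$ and $\delta_s$ is continuous, the supremum over such $P$ already computes $\|\delta_s\|_{(\mathcal{A}_{\mu}^1)^*}$. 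This is exactly the reduction the paper's own proof makes implicitly by working with polynomials throughout.
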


\begin{proof}
Let $s \in \mathbb{C}_1$. By the reproducing kernel property on $\mathcal{A}_{\mu}^2$ (or just by a simple computation), for any Dirichlet polynomial we have
\[ P(s) = \int_{0}^{+ \infty} \lim_{T \rightarrow + \infty} \frac{1}{2T} \int_{-T}^{T}  P(\sigma+it) \overline{K_{\mu}(s, \sigma+it)} \, dt d \mu( \sigma). \]
Now by definition of the norm of Dirichlet polynomials in $\mathcal{H}^1$ (see definition 1 from \cite{bayart2002hardy}), we have
\[ \Vert P \Vert_{ \mathcal{A}_{\mu}^1} = \bigg{(} \int_{0}^{+ \infty} \lim_{T \rightarrow + \infty} \frac{1}{2T} \int_{-T}^{T}  \vert P(\sigma+it)  \vert dt d \mu( \sigma) \bigg{)} \cdot \]
Consequently 
\[ \vert P(s) \vert \leq \Vert P \Vert_{ \mathcal{A}_{\mu}^1} \times \Vert K_{\mu}(s, \, \cdot \,) \Vert_{\infty}. \]
Now
\[ \Vert K_{\mu}(s, \cdot) \Vert_{\infty} = \sup_{w \in \mathbb{C}_+} \bigg{\vert} \sum_{n=1}^{+ \infty} \frac{n^{- \overline{s}+w}}{w_h(n)} \bigg{\vert} \leq \sum_{n=1}^{+ \infty} \frac{n^{- \Re(s)}}{w_h(n)}\]
and we point out that $w_h(1)=1$ so
\[ \limsup_{\Re(s) \rightarrow + \infty} \Vert \delta_{s} \Vert_{ (\mathcal{A}_{\mu}^1)^*} \leq \lim_{\Re(s) \rightarrow + \infty} \sum_{n=1}^{+ \infty} \frac{n^{- \Re(s)}}{w_h(n)} = 1. \]
On the other hand, it is clear that $\Vert \delta_s \Vert_{(\mathcal{A}_{\mu}^1)^{*}} \geq 1$ and then we obtain the result. 

\end{proof}

\begin{proposition}
Let $\Phi$ be a symbol. If $C_{\Phi}$ is a contraction then $c_0 \geq 1$.
\end{proposition}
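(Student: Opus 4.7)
The plan is to argue by contradiction: suppose $c_0 = 0$, so that $\Phi = \varphi$ is itself a Dirichlet series $\Phi(s) = \sum_{n \geq 1} a_n n^{-s}$ mapping $\mathbb{C}_+$ into $\mathbb{C}_{1/2}$ by Theorem~2(i). The strategy is to exploit the adjoint identity $C_\Phi^*\delta_s = \delta_{\Phi(s)}$ (which follows from $\langle C_\Phi f, \delta_s\rangle = f(\Phi(s)) = \langle f, \delta_{\Phi(s)}\rangle$) together with the previous lemma, and to combine this with a perturbative lower bound for $\|\delta_w\|_{(\mathcal{A}_\mu^1)^*}$ at finite points $w \in \mathbb{C}_{1/2}$.

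First I would verify that $a_1 = \lim_{\Re(s) \to +\infty} \Phi(s)$ belongs to $\mathbb{C}_{1/2}$ strictly. Clearly $\Re(a_1) \geq 1/2$. If equality held then the tail $\Psi(s) = \sum_{n \geq 2} a_n n^{-s}$ would satisfy $\Re(\Psi) > 0$ on $\mathbb{C}_+$ while $\Psi(s) \to 0$ as $\Re(s) \to +\infty$; applying Kronecker's theorem on rational independence of $(\log p)_{p\text{ prime}}$ to a truncated partial sum and controlling the absolutely convergent tail would yield $s$ with $\Re(s)$ large and $\Re(\Psi(s)) < 0$, unless all $a_n = 0$ for $n \geq 2$, in which case $\Phi \equiv a_1$ and $\Re(a_1) > 1/2$ is forced directly by $\Phi(\mathbb{C}_+) \subset \mathbb{C}_{1/2}$.

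Contractivity of $C_\Phi$ on $\mathcal{A}_\mu^1$ combined with the adjoint identity gives
\[
\|\delta_{\Phi(s)}\|_{(\mathcal{A}_\mu^1)^*} \leq \|\delta_s\|_{(\mathcal{A}_\mu^1)^*},
\]
and by the previous lemma the right-hand side tends to $1$ as $\Re(s) \to +\infty$. To reach a contradiction I would construct a test function $f \in \mathcal{A}_\mu^1$ with $|f(a_1)| > \|f\|_{\mathcal{A}_\mu^1}$. Taking $f(s) = 1 + \varepsilon e^{i\alpha}n^{-s}$ with the phase $\alpha = \Im(a_1)\log n$ chosen so that $f(a_1) = 1 + \varepsilon n^{-\Re(a_1)}$ is real and greater than $1$, the norm factorises as
\[
\|f\|_{\mathcal{A}_\mu^1} = \int_0^{+\infty} I(\varepsilon n^{-\sigma})\, d\mu(\sigma), \qquad I(a) := \int_{\mathbb{T}} |1 + az|\, dm(z),
\]
and using $I(a) = 1 + O(a^2)$ near $0$ one obtains $\|f\|_{\mathcal{A}_\mu^1} = 1 + O(\varepsilon^2)$, whereas $|f(a_1)| = 1 + \varepsilon n^{-\Re(a_1)}$, so the ratio exceeds $1$ for $\varepsilon$ small. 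Finally, choosing any sequence $s_k$ with $\Re(s_k) \to +\infty$, continuity of $f$ at $a_1$ gives
\[
|f(a_1)| = \lim_k |f(\Phi(s_k))| \leq \limsup_k \|\delta_{\Phi(s_k)}\|_{(\mathcal{A}_\mu^1)^*}\, \|f\|_{\mathcal{A}_\mu^1} \leq \|f\|_{\mathcal{A}_\mu^1},
\]
contradicting $|f(a_1)| > \|f\|_{\mathcal{A}_\mu^1}$.

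The main obstacle is the last perturbative estimate, namely $\|\delta_{a_1}\|_{(\mathcal{A}_\mu^1)^*} > 1$ strictly: the cheap bound from the constant function only yields $\geq 1$, so one really has to exhibit a non-trivial test function and control the second-order behaviour of the $\mathcal{A}_\mu^1$-norm under perturbation by a character $n^{-s}$.
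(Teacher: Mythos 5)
Your argument is correct in substance and shares the paper's skeleton (argue by contradiction from $c_0=0$, use $\Phi(s)\to c_1$ with $\Re(c_1)>1/2$, and use Lemma~2 to make $\Vert \delta_s \Vert_{(\mathcal{A}_{\mu}^1)^*}\to 1$), but it replaces the paper's key quantitative step by a different mechanism. The paper gets the strict lower bound from known results: since $\Vert \cdot \Vert_{\mathcal{A}_{\mu}^p}\leq \Vert \cdot \Vert_{\mathcal{H}^p}$, one has $\Vert \delta_{\Phi(s)} \Vert_{(\mathcal{A}_{\mu}^p)^*}\geq \Vert \delta_{\Phi(s)} \Vert_{(\mathcal{H}^p)^*}=\zeta(2\Re\Phi(s))^{1/p}$, which tends to $\zeta(2\Re(c_1))^{1/p}>1$, while $\Vert \delta_s \Vert_{(\mathcal{A}_{\mu}^p)^*}\leq \Vert \delta_s \Vert_{(\mathcal{A}_{\mu}^1)^*}\to 1$; this contradicts contractivity. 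You instead prove by hand that point evaluation in $\mathbb{C}_{1/2}$ has norm strictly larger than $1$, using the test function $1+\varepsilon e^{i\alpha}n^{-s}$ and the second-order expansion $\int_{\mathbb{T}}|1+aw|\,dm(w)=1+O(a^2)$ (e.g.\ bounded by $\sqrt{1+a^2}$). This is more self-contained --- it avoids Bayart's exact formula for $\Vert \delta_s \Vert_{(\mathcal{H}^p)^*}$ --- at the cost of an extra computation. Your Kronecker sketch for $\Re(c_1)>1/2$ is essentially the standard argument; the paper simply cites Lemma 3.3 of Gordon--Hedenmalm.

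Two small repairs are needed. First, you work throughout with contractivity of $C_{\Phi}$ on $\mathcal{A}_{\mu}^1$, whereas the hypothesis as used later is contractivity on $\mathcal{A}_{\mu}^p$, and a contraction on $\mathcal{A}_{\mu}^p$ is not obviously one on $\mathcal{A}_{\mu}^1$. The fix is immediate: run the argument in $\mathcal{A}_{\mu}^p$, using $\Vert \delta_s \Vert_{(\mathcal{A}_{\mu}^p)^*}\leq \Vert \delta_s \Vert_{(\mathcal{A}_{\mu}^1)^*}\to 1$ for the upper bound and noting that $\int_{\mathbb{T}}|1+aw|^p\,dm(w)=1+O(a^2)$ as well, so the same test function gives $\Vert f \Vert_{\mathcal{A}_{\mu}^p}=1+O(\varepsilon^2)<|f(c_1)|$. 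Second, the reduction of $\Vert f \Vert_{\mathcal{A}_{\mu}^1}$ to the one-dimensional integral $I(\varepsilon n^{-\sigma})$ should record that the Bohr lift of $n^{-s}$ for $n\geq 2$ is a nonconstant monomial, hence uniformly distributed on $\mathbb{T}$. Neither point affects the validity of the approach.
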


\begin{proof}
Let $s \in \mathbb{C}_{1/2}$. For every $f \in \mathcal{A}_{\mu}^p$ we have
\[ \vert f \circ \Phi(s) \vert \leq \Vert \delta_s \Vert_{( \mathcal{A}_{\mu}^p)^*} \Vert f \circ \Phi \Vert \leq \Vert \delta_s \Vert_{( \mathcal{A}_{\mu}^p)^*} \Vert C_{\Phi} \Vert \Vert f \Vert \]
and then
\[ \frac{\Vert \delta_{ \Phi(s)} \Vert_{( \mathcal{A}_{\mu}^p)^*}}{\Vert \delta_s \Vert_{( \mathcal{A}_{\mu}^p)^*}} \leq \Vert C_{\Phi} \Vert. \]
By inclusion of the spaces $ \mathcal{A}_{\mu}^p$ and the fact that $\mathcal{H}^p \subset \mathcal{A}_{\mu}^p$ with $\dis\Vert \cdot \Vert_{ \mathcal{A}_{\mu}^p} \leq \Vert \cdot \Vert_{\mathcal{H}^p}$ we obtain:
\[ \frac{\Vert \delta_{ \Phi(s)} \Vert_{( \mathcal{H}^p)^*}}{\Vert \delta_s \Vert_{( \mathcal{A}_{\mu}^1)^*}} \leq \Vert C_{\Phi} \Vert. \]
By Theorem 3 from \cite{bayart2002hardy}, we obtain
\[ \zeta(2 \Re( \Phi(s)))^{1/p} \times \Vert \delta_s \Vert_{( \mathcal{A}_{\mu}^1)^*}^{-1}  \leq \Vert C_{ \Phi} \Vert. \]
Now assume $c_0=0$, then $\Phi(s)= \varphi(s) = \displaystyle{\sum_{n=1}^{+ \infty} c_n n^{-s}}$ and $\Re(c_1)>1/2$ (see proof of Lemma $3.3$ from \cite{gordon1999composition}). Finally thanks to the Lemma $2$, when $\Re(s)$ goes to infinity we get
\[ \Vert C_{\Phi} \Vert \geq \zeta(2 \Re(c_1))^{1/p} >1 \]
and consequently $C_{\Phi}$ is not a contraction.
\end{proof}

\begin{rem}
In the previous Lemma we actually used that for every $s \in \mathbb{C}_{1/2}$, $\delta_s \circ C_{\Phi} = \delta_{ \Phi(s)}$.
\end{rem}

\begin{proof}[Proof of $(iii) \Rightarrow (iv)$]
Assume that $C_{\Phi}$ is an isometry. By the last lemma, $c_0 \geq 1$ and then we know that $\Phi : \mathbb{C}_+ \rightarrow \mathbb{C}_+$ thanks to the Theorem $1$. One has
\[ \Vert 2^{-s} \Vert_{ \mathcal{A}_{\mu}^p} = \Vert 2^{- \Phi} \Vert_{\mathcal{A}_{\mu}^p}. \]
Now by \cite{bailleullefevre2013},Th6,
\[ \int_{0}^{ + \infty} \Vert 2^{- \sigma - \tiny\bullet} \Vert_{ \mathcal{H}^p}^p - \Vert 2^{- \Phi(\sigma + \tiny\bullet)} \Vert_{ \mathcal{H}^p}^p \, d \mu (\sigma) = 0. \]
But
\[ \Vert 2^{- \Phi(\sigma + \tiny\bullet)} \Vert_{ \mathcal{H}^p} = \Vert 2^{- \sigma - (\Phi(\sigma + \tiny\bullet)- \sigma)} \Vert_{ \mathcal{H}^p} = \Vert C_{ \Phi_{\sigma}- \sigma} (2^{ - \sigma - \bullet}) \Vert_{ \mathcal{H}^p}. \]
Thanks to the Schwarz's lemma in this framework (recall that $c_0 \geq 1$) we know that $\Phi_{\sigma}- \sigma : \mathbb{C}_+ \rightarrow \mathbb{C}_+$. So by the Theorem 1, $C_{\Phi_{\sigma}- \sigma}$ is a bounded composition operator on $\mathcal{H}^p$ and $\Vert C_{\Phi_{\sigma}- \sigma} \Vert \leq 1$. Then
\[  \Vert 2^{- \Phi(\sigma + \tiny\bullet)} \Vert_{ \mathcal{H}^p}  \leq \Vert 2^{ - \sigma - \bullet} \Vert_{ \mathcal{H}^p}. \]
Consequently $2^{- \sigma} = \Vert 2^{- \sigma - \tiny\bullet} \Vert_{ \mathcal{H}^p} = \Vert 2^{- \Phi(\sigma + \tiny\bullet)} \Vert_{ \mathcal{H}^p}$ for every $\sigma>0$ (recall that $h$ is a positive continuous function). Now by Lemma $1$, if $\Phi$ is not a vertical translation, there exists $\varepsilon$ and $\eta >0$ such that $\Phi( \mathbb{C}_{1/2- \varepsilon}) \subset \mathbb{C}_{1/2+ \eta}$ and then  for every $ \sigma > 1/2 -  \varepsilon$,
\[ 2^{-\sigma} = \Vert 2^{- \Phi(\sigma + \tiny\bullet)} \Vert_{ \mathcal{H}^p} \leq \Vert 2^{- \Phi(\sigma + \tiny\bullet)} \Vert_{ \mathcal{H}^{\infty}} \leq 2^{-1/2 - \eta} \]
and this is obviously false.
\end{proof}

\begin{rem}
Let $\mu$ be a probability measure on $(0,+ \infty)$ such that $0 \in Supp( \mu)$ and $d\mu= h d \sigma$ where $h$ is a nonnegative function. If there exists an open interval $I$ such that $h$ is positive on $I$ then the theorem still holds. It is a consequence of the following lemma and some easy adaptations of the previous proof.
\end{rem}

\begin{lemme}
Let $\Phi$ be a symbol with $c_0 \geq 1$. If $\Phi$ is not a vertical translation then for every $\varepsilon>0$, there exists $\eta=\eta_{\varepsilon}>0$ such that $\Phi( \mathbb{C}_{\varepsilon}) \subset \mathbb{C}_{\varepsilon+ \eta}$.
\end{lemme}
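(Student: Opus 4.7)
The plan is to split on the value of $c_0$. When $c_0 \geq 2$, the inclusion is immediate: Theorem $1$ gives $\varphi(\mathbb{C}_+) \subset \mathbb{C}_+$, so for $s \in \mathbb{C}_\varepsilon$,
\[ \Re(\Phi(s)) = c_0 \Re(s) + \Re(\varphi(s)) > c_0 \varepsilon \geq 2\varepsilon, \]
and the choice $\eta := \varepsilon$ suffices. The substantive case is $c_0 = 1$, where $\Re(\Phi(s)) = \Re(s) + \Re(\varphi(s))$ and the problem reduces to showing $\inf_{s \in \mathbb{C}_\varepsilon} \Re(\varphi(s)) > 0$ (then take $\eta$ equal to this positive infimum). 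I would argue by contradiction: if this infimum is zero, pick $s_n = \sigma_n + it_n$ with $\sigma_n > \varepsilon$ and $\Re(\varphi(s_n)) \to 0$, and pass to a subsequence so that either (a) $\sigma_n \to \sigma_* \in [\varepsilon, \infty)$, or (b) $\sigma_n \to +\infty$.

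In subcase (a), I would use the Bohr point of view. By compactness of the infinite torus $\mathbb{T}^\infty$ together with a diagonal extraction over primes, refine further so that $p_j^{-it_n} \to \chi(p_j)$ for every prime $p_j$; this defines a completely multiplicative unimodular character $\chi$. Because $\varphi$ converges uniformly on every $\mathbb{C}_\delta$, the translated series $\varphi(\cdot + it_n)$ converges uniformly on every $\mathbb{C}_\delta$ to the twisted series $\varphi^*(s) := \sum_{n \geq 1} c_n \chi(n) n^{-s}$ (the coefficient moduli are unchanged, so the same region of uniform convergence is retained). As a limit of functions with positive real part on $\mathbb{C}_+$, one has $\Re(\varphi^*) \geq 0$ on $\mathbb{C}_+$, and passing to the limit in $s_n$ yields $\Re(\varphi^*(\sigma_*)) = 0$ at the interior point $\sigma_* > 0$. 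The minimum principle for harmonic functions then forces $\Re(\varphi^*) \equiv 0$, so $\varphi^*$ is a purely imaginary constant $i\alpha$; comparing coefficients and using $|\chi(n)| = 1$ gives $c_n = 0$ for all $n \geq 2$ and $c_1 \in i\mathbb{R}$, which means $\varphi$ itself is a purely imaginary constant, contradicting the hypothesis that $\Phi$ is not a vertical translation.

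In subcase (b), uniform convergence of $\varphi$ at infinity gives $\varphi(s_n) \to c_1$, so $c_1 = i\tau$ for some $\tau \in \mathbb{R}$. Setting $\psi := \varphi - i\tau = \sum_{n \geq 2} c_n n^{-s}$, one has $\Re(\psi) > 0$ on $\mathbb{C}_+$ and $\psi(s) \to 0$ at infinity. If $\psi \not\equiv 0$, let $n_0 \geq 2$ be the smallest index with $c_{n_0} \neq 0$, so that $\psi(\sigma + it) = c_{n_0} n_0^{-\sigma - it} + o(n_0^{-\sigma})$ as $\sigma \to \infty$. By density of $\{n_0^{-it} : t \in \mathbb{R}\}$ in $\mathbb{T}$, pick $t_*$ so that $\Re(c_{n_0} n_0^{-it_*}) < -|c_{n_0}|/2$; then $\Re(\psi(\sigma + it_*)) < 0$ for $\sigma$ large enough, contradicting positivity. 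Hence $\varphi \equiv i\tau$, again contradicting the hypothesis. The main obstacle is subcase (a): one must carefully justify the Bohr extraction and the uniform convergence $\varphi(\cdot + it_n) \to \varphi^*$ on each strip $\mathbb{C}_\delta$, so that the minimum principle applies to a genuine holomorphic limit; subcase (b) is then a comparatively routine asymptotic estimate on the leading Dirichlet term.
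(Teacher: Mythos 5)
Your proof is correct in substance but follows a genuinely different route from the paper's. The paper disposes of the lemma in a few lines by a case analysis on $\varphi$: when $\varphi$ is non-constant it cites Proposition 4.2 of \cite{gordon1999composition} as a black box to get $\varphi(\mathbb{C}_{\varepsilon})\subset\mathbb{C}_{\vartheta}$ for some $\vartheta>0$ and takes $\eta=(c_0-1)\varepsilon+\vartheta$, while the two constant cases ($\varphi\equiv i\tau$, which forces $c_0>1$ since $\Phi$ is not a vertical translation, and $\varphi\equiv c_1\in\mathbb{C}_+$) are immediate. What you do in the case $c_0=1$ is essentially reprove that cited proposition: the assertion $\inf_{\mathbb{C}_{\varepsilon}}\Re\varphi>0$ for non-constant $\varphi$ with nonnegative real part is exactly its content, and your two subcases (vertical-limit extraction plus the minimum principle when $\sigma_n$ stays bounded; the leading-coefficient asymptotics when $\sigma_n\to+\infty$) constitute a legitimate self-contained proof of it. The paper's route buys brevity; yours buys self-containedness, at the cost of importing the vertical-limit-function machinery. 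One justification you give is off: the twisted series $\varphi^{*}$ retains the half-planes of \emph{uniform} convergence not because ``the coefficient moduli are unchanged'' (unchanged moduli only control the abscissa of absolute convergence) but because twisting by a character in the closure of $\{(p_j^{-it})_j\}$ preserves $\sup_{\Re s>\delta}$ of every partial sum (Kronecker's theorem), so the Cauchy criterion for uniform convergence transfers; alternatively, you only need local uniform convergence of the translates $\varphi(\cdot+it_n)$ on $\mathbb{C}_+$, which Montel's theorem gives for free since they are uniformly bounded on each $\mathbb{C}_{\delta}$, together with the usual identification of the Dirichlet coefficients of the limit. With that repair the argument is complete.
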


\begin{proof}
First we assume that $\varphi$ is non constant then $\varphi : \mathbb{C}_+ \rightarrow \mathbb{C}_+$ and by Proposition 4.2 from \cite{gordon1999composition}, there exists $\vartheta>0$ such that $\varphi(\mathbb{C}_{\varepsilon}) \subset \mathbb{C}_{\vartheta}$ and consequently $\Phi(\mathbb{C}_{\varepsilon}) \subset \mathbb{C}_{c_0 \varepsilon + \vartheta}$. In this case, it suffices to choose $\eta = (c_0-1) \varepsilon + \vartheta$ which is positive because $c_0 \geq 1$.

If $\varphi$ is constant equals to $i \tau$ ($\tau \in \mathbb{R})$ and $c_0>1$ then $\Phi( \mathbb{C}_{\varepsilon}) \subset \mathbb{C}_{c_0 \varepsilon}$ and it suffices to choose $\eta = (c_0-1) \varepsilon$.

If $\varphi$ is constant and equals to $c_1  \in \mathbb{C}_+$ and $c_0 \geq 1$, $\Phi( \mathbb{C}_{\varepsilon}) \subset \mathbb{C}_{ c_0 \varepsilon + \Re(c_1)}$ and it suffices to choose $\eta= (c_0-1) \varepsilon + \Re(c_1)$.
\end{proof}

\begin{footnotesize}
\nocite{*}
\bibliography{biblio2}
\bibliographystyle{plain}
\end{footnotesize}

{\small 
\noindent{\it 
Univ Lille-Nord-de-France UArtois, \\ 
Laboratoire de Math\'ematiques de Lens EA~2462, \\
F\'ed\'eration CNRS Nord-Pas-de-Calais FR~2956, \\
F-62\kern 1mm 300 LENS, FRANCE \\
maxime.bailleul@euler.univ-artois.fr
}}

\end{document}